\newtheorem {theorem} {Theorem}[section]
\newtheorem {corollary} [theorem]{Corollary}
\newtheorem {lemma}  [theorem]{Lemma}
\theoremstyle{definition}
\theoremstyle{remark}
\newtheorem{rem}{Remark}
\newcommand{\lmo}{\mathrm{LMO}}
\newcommand{\rd}{\mathbb{R}^d}
\newcommand{\bmo}{\mathrm{BMO}}
\begin{document}

\title{A T(L) theorem for the
Calder\'{o}n--Zygmund operators in $\bmo(\rd)$}



\author{Andrei V.~Vasin }

\address{St.~Petersburg Department
of Steklov Mathematical Institute, Fontanka 27, St.~Petersburg 191023, Russia}
\email{andrejvasin@gmail.com}


\subjclass[2010]{Primary 42B20; Secondary 46E30}

\keywords{Calder\'{o}n--Zygmund operator,  BMO space}

\begin{abstract}
 A variant of the global $T(1)$ criterion to characterize the bounded
Calder\'{o}n--Zygmund operators on $\bmo(\rd)$ is proved. We apply it to   the certain  Calder\'on commutators.
\end{abstract}

\maketitle

\section{Introduction}\label{s_int}


\subsection{Background }
It is well known that    Calder\'on--Zygmund operators  bounded in  $L^2(\rd)$, also  are bounded  from $L^\infty(\rd)$ to $\bmo(\rd)$.
To extend  Calder\'on--Zygmund operators on homogeneous $\bmo(\rd)$ and to have  an estimate on seminorms
\begin{equation}\label{e_first}
   \|Tf\|_{\bmo}\leq C \|f\|_{\bmo},
\end{equation}
 it is assumed   necessarily,  that  $T1= const$ (for convolution operators see \cite{P}).
One may escape this assumption  and  consider \textit{normed} $\bmo$ on bounded domains \cite{V}. This may be  done  even in non doubling setting for the Tolsa space $\textrm{RBMO}(\mu)$ with   finite measure  \cite{DV}.
  Closely related  approach concerns  the Hermite--Calder\'on--Zygmund operators on the Hermite $\bmo_H(\rd)$ space  \cite{B}.

   In this paper we consider the classical space $\bmo(\rd)$ as a normed space, where we define   the  Calder\'on--Zygmund operators.  A criterion  to characterize the bounded Calder\'on--Zygmund operators is proved. Formally, the result obtained is similar to ones in \cite{B,V} and is related to characterization of pointwise multipliers in $\bmo(\rd)$    \cite{NY}.  We apply T(L) theorem to research the certain Calder\'on commutators on
   $\bmo(\rd)$.
\subsection{The space $\bmo(\rd)$ }
Let $Q=Q(c,\ell)$  be a cube in $\rd$ with sides parallel to the axes, with side length $\ell=\ell(Q)$ and the centre $c=c(Q)$, and let $|Q|$ be its volume.
The space $\bmo(\rd)$ consists of those $f\in L^1_{loc}(\rd)$ that have the finite seminorm
\begin{equation}\label{e_df_osc}
\|f\|_{\bmo}=\sup_{Q\subset\rd} \inf_{b_Q \in \mathbb{R}}\frac{1}{|Q|}
\int_Q |f - b_Q| dx.
\end{equation}
One may naturally define a norm in $\bmo(\rd)$. Let $f_Q=\frac{1}{|Q|}\int_{Q}f(x)dx$ be a standard average over a cube $Q$.
 Fix a cube $Q_0$ with the centre in origin and with side length equaled to 1.
Then the  space $\bmo(\rd)$, equipped with the norm
\begin{equation}\label{e_df_osc1}
\|f\|^*_{\bmo}=\|f\|_{\bmo} + |f_{Q_0}|,
\end{equation}
is a Banach space. Replacing  $Q_0$ by any other cube, we obtain a norm equivalent to initial one.

\subsection{Calder\'{o}n--Zygmund operators}

A Calder\'{o}n--Zygmund kernel   is a measurable function
$K (x, y)$ on $\rd\times \rd\setminus\{(x, x): x\in \rd\}$
satisfying the following conditions:
\begin{align}
|K(x, y|
\le C |x-y|^{-d} \label{e_cz1}\\
|K(x_1, y)- K(x_2, y)|
\le C\frac{|x_1- x_2|^\delta}{|x_1- y|^{d+\delta}},\label{e_cz3}
\end{align}
where $ 2|x_1- x_2|\le |x_1- y|$ and $\delta$,
$0 < \delta\le 1$, is a regularity constant specific to the kernel $K$ and $|x-y|$ is  euclidean distance in $\rd$.

A singular  integral operator defined  as
\begin{equation}\label{e_cz}
  Tf(x)=\int K (x, y)f(y)dy.
\end{equation}
when $x\in \rd \backslash\textrm{supp} f$ is called a  Calder\'{o}n-Zygmund operator  associated to the kernel $K (x, y)$ if   it  is extended to  a bounded operator in $L^2(\rd)$.

  Even $T$ is bounded on $L^2(\rd)$, in Section 2 we follow to the modified argument \cite[p. 156-157]{S} to indicate   what does it mean that  $T$ is defined and bounded  in $\bmo(\rd)$.

 \subsection{T(L) theorem and applications}
 To formulate the main result we introduce an auxilliary space.  Given a cube $Q=Q(c,\ell)$,
let $\widetilde{Q}=\widetilde{Q}(\tilde{c}, \tilde{\ell})$ be a smallest cube   that contains a cube $Q$ and the fixed cube $Q_0$. Define a  function (indeed, a log--distance  function between cubes $Q$ and $Q_0$, similar to ones in \cite{Jo,NY})
 \[
 L(Q)=\log \max(\tilde{\ell}/\ell,  \tilde{\ell})+1 .
 \]

The logarithmic  mean oscillation space $\lmo(\rd)$ consists of those $f\in\bmo(\rd)$ that have  finite  seminorm
\begin{equation}\label{lmo}
\|f\|_{\lmo}=\sup_{Q\subset\rd} \inf_{b_Q \in \mathbb{R}}\frac{L(Q)}{|Q|}
\int_Q |f - b_Q| dx.
\end{equation}

 Let us denote $1=\chi_{\rd}$ and
  $\log_{s,-s}(x)=  \log |x-s|-\log|x+s|$, $s\in \rd $.
 The  main theorem  may be interpreted as a   T(1) theorem, as well as a testing criterion  on the certain family $\log |x|\cup \{\log_{s,-s}(x)\}_s $ of typical unbounded functions  in $\bmo(\rd)$.
\begin{theorem}\label{t_main}
Let $T$ be a Calder\'on--Zygmund operator associated to a standard kernel $K$ and bounded on $L^2(\rd)$.
The following properties are equivalent:
\begin{enumerate}
  \item[(i)] $T$ is bounded from the normed space $\bmo(\rd)$ to the homogeneous  $\bmo(\rd)$; there is an estimate with a constant independent of $f$
  \begin{equation}\label{e_11}
    \|Tf\|_{\bmo}\leq C \|f\|^*_{\bmo};
  \end{equation}\[\]
  \item[(ii)] $T1 \in \lmo(\rd)$;
  \item[(iii)] $T\log|x|, \,T\log_{s,-s}(x) \in \bmo(\rd)$ and
   \[\|T\log_{s,-s}(x)\|_{\bmo}\leq C \|\log_{s,-s}(x)\|^*_{\bmo},\]
  uniformly with respect to $s\in \rd$.
    \end{enumerate}
\end{theorem}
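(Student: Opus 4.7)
The plan is to prove the three-way equivalence via the cycle $(ii)\Rightarrow(i)\Rightarrow(iii)\Rightarrow(ii)$. The main analytic step is $(ii)\Rightarrow(i)$, which adapts the classical $T(1)$-theorem strategy to the normed $\bmo(\rd)$ setting. Fix $f\in\bmo(\rd)$ with $\|f\|^*_{\bmo}\le 1$ and a cube $Q$, and use the decomposition
\[
f = (f-f_{2Q})\chi_{2Q} + (f-f_{2Q})\chi_{\rd\setminus 2Q} + f_{2Q}.
\]
Applying $T$ produces three terms whose oscillations on $Q$ are bounded separately: the local part by $L^2$-boundedness of $T$ combined with the John--Nirenberg inequality, yielding a bound $C\|f\|_{\bmo}$; the far part by the kernel regularity \eqref{e_cz3} together with a dyadic decomposition of $\rd\setminus 2Q$ into annuli $2^{k+1}Q\setminus 2^kQ$, whose contributions sum as $\sum_k 2^{-k\delta}k\|f\|_{\bmo}$; and the constant part $f_{2Q}\cdot T1$ whose oscillation equals $|f_{2Q}|\cdot\mathrm{osc}_Q(T1)$. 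Here the crucial step is the telescoping inequality $|f_{2Q}|\le |f_{Q_0}|+CL(Q)\|f\|_{\bmo}\le CL(Q)\|f\|^*_{\bmo}$ (accumulated over the roughly $L(Q)$ intermediate cubes joining $Q_0$ to $2Q$), combined with the bound $\mathrm{osc}_Q(T1)\le \|T1\|_{\lmo}/L(Q)$ supplied by hypothesis (ii). The factor $L(Q)$ cancels, yielding $\le C\|f\|^*_{\bmo}$, and taking the supremum over $Q$ gives (i).

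For $(i)\Rightarrow(iii)$, it suffices to check that $\log|x|$ lies in $\bmo(\rd)$ with fixed norm, and that $\|\log_{s,-s}\|^*_{\bmo}$ is uniformly bounded in $s\in\rd$. The seminorm is at most $2\|\log|\cdot|\|_{\bmo}$ by translation invariance, while the correction $|(\log_{s,-s})_{Q_0}|$ is uniformly bounded using the odd symmetry $\log_{s,-s}(-x)=-\log_{s,-s}(x)$ about the origin and the uniform integrability of the logarithmic singularities over $Q_0$. Then (i) applied directly to these functions yields (iii).

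For $(iii)\Rightarrow(ii)$, fix a cube $Q$ and aim to bound $L(Q)\cdot\mathrm{osc}_Q(T1)$. The strategy is to construct, from the family $\{\log|x|\}\cup\{\log_{s,-s}\}_s$, a test function $\psi_Q\in\bmo(\rd)$ satisfying $\|\psi_Q\|^*_{\bmo}\le C$ uniformly in $Q$ and whose localized average on $Q$ is comparable to $L(Q)$ (e.g., for cubes far from the origin one takes $\psi_Q=\log|x|$, since $(\log|x|)_Q\approx \log|c_Q|\asymp L(Q)$; for cubes where this quantity degenerates one chooses $\log_{s,-s}$ with $s$ depending on $Q$). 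Writing
\[
T\psi_Q = (\psi_Q)_Q\cdot T1 + T\bigl(\psi_Q-(\psi_Q)_Q\bigr)
\]
and handling the second term on $Q$ by an argument parallel to the ``far part'' in $(ii)\Rightarrow(i)$ (splitting into a local $L^2$-piece and a kernel-smoothness tail), one arrives at $L(Q)\cdot\mathrm{osc}_Q(T1)\le C\|T\psi_Q\|_{\bmo}+C\le C$, closing the cycle via (iii).

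The main obstacle is the construction of $\psi_Q$ in $(iii)\Rightarrow(ii)$: one must exhibit, uniformly in $Q$, log-type functions whose normed BMO norm is controlled while their average on $Q$ captures the full factor $L(Q)$, and whose tail behaviour is compatible with the kernel-smoothness estimate. Handling simultaneously small cubes near the origin, cubes with $|c_Q|\approx 1$ where $\log|\cdot|$ degenerates, and distant or large cubes is the delicate core of the argument and explains why the log--distance function $L(Q)$ is the natural weight appearing in (ii).
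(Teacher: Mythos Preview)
Your outline follows the paper's proof: the same decomposition and telescoping for $(ii)\Rightarrow(i)$ (this is the paper's Lemma~\ref{l_23} plus the estimate~\eqref{dir_av}), and the same test-function strategy for $(iii)\Rightarrow(ii)$, writing $T_Q\psi_Q=(\psi_Q)_{2Q}T_Q1+T_Q(\psi_Q-(\psi_Q)_{2Q})$ and bounding the second piece uniformly. The one point where your sketch is off is the regime allocation in $(iii)\Rightarrow(ii)$, and it is worth correcting because it is exactly the ``delicate core'' you flag. Your parenthetical example is inverted: $\log|x|$ is \emph{not} the right test function for cubes far from the origin in general, since for a small cube at moderate distance (say $|c|\sim 1$, $\ell=2^{-N}$) one has $(\log|x|)_Q\approx\log|c|\sim 1$ while $L(Q)\sim N$. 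The paper's split is $|c|\le\max(\ell,1)$, where $L(Q)\approx|\log\max(|c|,\ell)|+1$ and $\psi_Q=\log|x|$ works, versus $|c|\ge\max(\ell,1)$, where $L(Q)\approx\log(|c|/\ell)+1$ and one takes $\psi_Q=\log_{s,-s}$ with the specific choice $s=-c$; then $(\log_{s,-s})_{2Q}\approx\log\max(2|c|,2\ell)-\log(2\ell)\approx\log(|c|/\ell)$, which recovers $L(Q)$. (The uniform bound on $\|\log_{s,-s}\|^*_{\bmo}$ that you place under $(i)\Rightarrow(iii)$ is actually not needed there---that implication is immediate from~\eqref{e_11}---but is precisely what makes the $(iii)\Rightarrow(ii)$ argument close, and the paper proves it separately as a lemma.)
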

\begin{rem}
   Observe that even seminorms  $\|Tf\|_{\bmo}$, as well $\|T1\|_{\lmo}$ are correctly  defined, we can not   define $\int_{Q_0}Tf(x)dx$, in  general. Therefore, we can not replace seminorm  $\|Tf\|_{\bmo}$ by norm $\|Tf\|^*_{\bmo}$ in (\ref{e_11}) (a similar  estimate of maximal function in BMO see in \cite[p. 179]{S}, \cite{BVS}). We return to this issue in Section 2.3.
\end{rem}

To illustrate the T(L) theorem, observe that if $T1=const$, and hence $\| T 1\|_{\lmo}=0$, we obtain a standard estimate  (\ref{e_first}) on seminorms.

    A more interesting application of the T(L) theorem  concerns   the Calder\'on commutators $\mathcal{C}_A$ associated with  the kernel
   \[K_A(x,y)=\frac{A(x)-A(y)}{(x-y)^2},\]
   where  $A(x)\in Lip_1(\mathbb{R})$. The question in order is to state, for which $A(x)\in Lip_1(\mathbb{R})$ the  Calder\'on commutator $\mathcal{C_A}$  is bounded on
 $\bmo(\mathbb{R})$. We have the following sufficiency condition.
 \begin{theorem}\label{t_com}
  Let $A(x) \in Lip_1(\mathbb{R})$ such that $A'\in \lmo(\mathbb{R})$, then $\mathcal{C_A}$ is bounded on $\bmo(\mathbb{R})$.
 \end{theorem}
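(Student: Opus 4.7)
\emph{Plan.} The strategy is to apply Theorem~\ref{t_main} to $T = \mathcal{C}_A$, verifying hypothesis~(ii). First, $\mathcal{C}_A$ is a Calder\'on--Zygmund operator: since $A \in Lip_1(\mathbb{R})$ one has $|K_A(x,y)| \leq \|A'\|_\infty |x-y|^{-1}$, and a direct estimate of $\partial_x K_A$ gives the regularity condition~(\ref{e_cz3}) with $\delta = 1$. The $L^2(\mathbb{R})$-boundedness of $\mathcal{C}_A$ is Calder\'on's classical theorem.

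The key algebraic step is the identification $\mathcal{C}_A 1(x) = \pi H(A')(x)$, where $H$ denotes the Hilbert transform. This is obtained by writing
\[
\frac{A(x)-A(y)}{(x-y)^2} = \bigl(A(x)-A(y)\bigr)\frac{\partial}{\partial y}\frac{1}{x-y}
\]
and integrating by parts on the truncation $\{|x-y|>\varepsilon,\ |y|<R\}$. The contributions at $|x-y|=\varepsilon$ cancel in pairs as $\varepsilon\to 0$ (using $A(x)-A(x\pm\varepsilon) \approx \mp\varepsilon A'(x)$), while the contributions at $|y|=R$ contribute additive constants that are immaterial for the $\bmo/\lmo$ seminorms. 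Hypothesis~(ii) of Theorem~\ref{t_main} thus reduces to proving that $H$ maps $\lmo(\mathbb{R})$ into itself, namely $\|Hg\|_{\lmo} \leq C \|g\|_{\lmo}$.

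To establish this, I would fix a cube $Q$ and decompose $g = (g-g_{2Q})\chi_{2Q} + (g-g_{2Q})\chi_{(2Q)^c}$ (the constant $g_{2Q}$ is harmless). The local piece is estimated by Cauchy--Schwarz and the $L^2$-boundedness of $H$, producing a contribution $\lesssim \|g\|_{\lmo}/L(2Q)$. The far piece is approximated on $Q$ by a constant via the smoothness of the Hilbert kernel, yielding a dyadic sum of the shape $\sum_{j\geq 1} 2^{-j}\bigl(\|g\|_{\bmo(2^{j+1}Q)} + j\|g\|_{\bmo}\bigr)$. After multiplying by $L(Q)$ and invoking quantitative comparisons between $L(Q)$ and $L(2^jQ)$ in the spirit of~\cite{Jo,NY}, both pieces are dominated by $C\|g\|_{\lmo}$. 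Theorem~\ref{t_main} then delivers the boundedness of $\mathcal{C}_A$ on $\bmo(\mathbb{R})$.

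The main obstacle is precisely this last step: unlike $\bmo$, the space $\lmo$ is not dilation invariant, because $L(\cdot)$ depends on the position and size of $Q$ relative to the reference cube $Q_0$. Consequently the classical Fefferman--Stein splitting has to be refined to recover the extra logarithmic factor $L(Q)$ demanded by the $\lmo$ seminorm, requiring particular care when $Q$ shrinks or drifts far from the origin.
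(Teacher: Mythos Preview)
Your strategy coincides with the paper's: reduce to condition~(ii) of Theorem~\ref{t_main}, integrate by parts to obtain $\mathcal{C}_{A,Q}1 = \mathcal{H}_Q A' + C(Q)$, and then show via a local/far split that the Hilbert transform sends $\lmo$ to itself. Your formulation ``$H\colon\lmo\to\lmo$'' is in fact cleaner than the paper's cube-by-cube presentation, but the content is the same.

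There is, however, a genuine gap in your far-piece estimate. The dyadic sum you record contains the term $\sum_{j\ge 1} 2^{-j}\,j\,\|g\|_{\bmo} \approx \|g\|_{\bmo}$, which after multiplication by $L(Q)$ gives $L(Q)\|g\|_{\bmo}$; this quantity is \emph{not} controlled by $C\|g\|_{\lmo}$ as $L(Q)\to\infty$, and the comparison between $L(Q)$ and $L(2^jQ)$ you invoke cannot rescue it, since the offending term carries no factor $L(2^jQ)$ at all. The paper repairs this by replacing the crude bound $|g_{2Q}-g_{2^{j+1}Q}|\le Cj\|g\|_{\bmo}$ with the $\lmo$-aware telescoping
\[
|g_{2Q}-g_{2^{j+1}Q}| \le C\sum_{s=1}^{j}\frac{\|g\|_{\lmo}}{L(2^{s+1}Q)},
\]
swapping the order of summation, and then establishing the scalar inequality $\sum_{s\ge 1} 2^{-s}/L(2^{s+1}Q) \le C/L(Q)$ through a case analysis on the position and size of $Q$ relative to $Q_0$. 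This summation lemma is exactly the refinement you flag as ``the main obstacle'' in your last paragraph, but it is the heart of the argument and must be carried out rather than deferred.
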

\begin{rem}
  In fact, the condition  $A'\in \lmo(\mathbb{R})\cap L^\infty(\mathbb{R})$ exactly characterises  the  pointwise multipliers on $\bmo(\mathbb{R})$ \cite{NY}, where one can find examples of the certain multipliers suitable for Theorem \ref{t_com}.
\end{rem}
  The assumption $A'\in \lmo(\mathbb{R})$ is sharp. We check this on
     the Calder\'on commutator $\mathcal{C}$ associated with $A(x)=|x|$.   It turns out that   $\mathcal{C}$ is bounded on $L^2(\mathbb{R})$, as well,
    from $L^\infty(\mathbb{R})$ to $\bmo(\mathbb{R})$ \cite[Section 5.17]{S}. However,
 $\mathcal{C}$ is unbounded on $\bmo(\mathbb{R})$.

\subsection{Organization and notation}
Definition of Calder\'{o}n--Zygmund operators on $\bmo(\rd)$ is given in  Section~2.
T(L) theorem is proved in Section 3. Theorems   \ref{t_com} is proved in  Section 4.

As usual, the letter $C$  denotes a constant, which may be different at each
occurrence and which is independent of the relevant variables under consideration.
If  $A/C\leq  B\leq C A,$ then we write $A\approx B$.

\section{Auxiliary results}
\subsection{Calder\'{o}n--Zygmund operators from $L^\infty(\rd)$ to $\bmo(\rd)$}

  For any cube $Q$ with the centre $c$,   and for a bounded function $f$  the following value is well defined
\begin{equation}\label{e_tq}
  T_Qf(x)=\int_{2Q}K(x,y)f(y)dy+ \int_{\rd\setminus 2Q} (K(x,y)-K(c,y))f(y)dy, \quad x\in Q,
\end{equation}
where by $\alpha Q$ we denote a cube $Q$  dilated  $\alpha $ times with respect to its centre.
The first summand on the right is well defined since $T$ is bounded in $L^2(\rd)$, and the second one is well defined by property (5) of the kernel $K$. Define  the oscilation
\[
\|f\|_Q=\inf_{b_Q\in \mathbb{R}}\frac{1}{|Q|}\int_{Q}|f-b_Q|dx.
\]
 Then we have
\[
\|T_Q f\|_Q\leq C \|f\|_{\infty}
\]
 uniformly with respect to $Q$. If $Tf$ is well defined, for instance for $f\in L^2(\rd)$, then
 \begin{equation}\label{e_bmo}
 \|Tf\|_{\bmo}=\sup_Q \|T_Q f\|_Q.
\end{equation}
So, even $L^2(\rd)\cap L^\infty(\rd)$ is not dense in $L^\infty(\rd)$, we may define  seminorm  $\|Tf\|_{\bmo}$ by (\ref{e_bmo}) in any case. Also, we see that operator $T$ so defined, is bounded from $L^\infty(\rd)$ to homogeneous $\bmo(\rd)$.

\subsection{Calder\'{o}n--Zygmund operators on $\bmo(\rd)$}

We extend the construction of Section 2.1 on $\bmo(\rd)$.
Let $f\in \bmo(\rd)$,  and let $f_Q=\frac{1}{|Q|}\int_{Q}f(x)dx$ be the standard average over a cube $Q$.
Consider the following decomposition
 \[f= f_{2Q} + (f-f_{2Q}).\]
  Define
  \[
  T_Qf=T_Qf_{2Q}+ T_Q(f-f_{2Q})
  \]
 by  formula (\ref{e_tq}) separately for each summand.

Since $f_{2Q}$ is a constant, then by Section 2.1, the first summand $T_Qf_{2Q}=f_{2Q}T_Q1$  is well defined. Even seminorm $\|T_Qf_{2Q}\|_Q$   depends obviously on $Q$, it is not issue for us.

For the second summand, we prove a next lemma that ascends to \cite{H};
see also \cite{DV} for non doubling measure and \cite{B}
  for the Hermite--Calder\'on--Zygmund operators.  We give the proof for the sake of completeness.
\begin{lemma}\label{l_23}
There exists an estimate for seminorms
\[
\|T_Q(f-f_{2Q})\|_Q \leq C \|f\|_{\bmo}
\]
with a constant independent of $f$ and $Q$.
\end{lemma}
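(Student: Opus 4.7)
The plan is to split $T_Q(f-f_{2Q})$ according to (\ref{e_tq}) and treat the two pieces by different methods. Writing $g=f-f_{2Q}$, set
\[
I_1(x)=\int_{2Q}K(x,y)g(y)\,dy=T(g\chi_{2Q})(x),\qquad I_2(x)=\int_{\rd\setminus 2Q}(K(x,y)-K(c,y))g(y)\,dy.
\]
It suffices to bound $\frac{1}{|Q|}\int_Q|I_1|\,dx$ and $\sup_{x\in Q}|I_2(x)|$ separately by $C\|f\|_{\bmo}$; taking $b_Q=0$ in (\ref{e_df_osc}) then yields the lemma.

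For $I_1$, John--Nirenberg gives $g\chi_{2Q}\in L^2(\rd)$ with $\|g\chi_{2Q}\|_{L^2}\le C|Q|^{1/2}\|f\|_{\bmo}$. Together with the $L^2$-boundedness of $T$ and Cauchy--Schwarz this yields
\[
\frac{1}{|Q|}\int_Q|I_1(x)|\,dx\le \frac{1}{|Q|^{1/2}}\|I_1\|_{L^2(Q)}\le \frac{C}{|Q|^{1/2}}\|g\chi_{2Q}\|_{L^2}\le C\|f\|_{\bmo}.
\]

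For $I_2$, I would exploit the H\"older regularity (\ref{e_cz3}) on the dyadic annular decomposition $\rd\setminus 2Q=\bigsqcup_{k\ge 1}(2^{k+1}Q\setminus 2^kQ)$. For $x\in Q$ and $y$ in the $k$-th annulus one has $|x-c|\lesssim\ell$ and $|x-y|\approx |c-y|\approx 2^k\ell$, so $|K(x,y)-K(c,y)|\le C\ell^\delta(2^k\ell)^{-d-\delta}$. The classical comparison $|f_{2^{k+1}Q}-f_{2Q}|\le Ck\|f\|_{\bmo}$ (obtained by telescoping the uniform bound $|f_{2^{j+1}Q}-f_{2^jQ}|\le 2^d\|f\|_{\bmo}$) gives $\int_{2^{k+1}Q}|g|\,dy\le Ck\,|2^{k+1}Q|\,\|f\|_{\bmo}$, and summing one obtains
\[
|I_2(x)|\le C\sum_{k\ge 1}\frac{\ell^\delta\cdot k(2^{k+1}\ell)^d}{(2^k\ell)^{d+\delta}}\|f\|_{\bmo}=C\sum_{k\ge 1}\frac{k}{2^{k\delta}}\|f\|_{\bmo}\le C\|f\|_{\bmo}
\]
uniformly on $Q$.

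The only real obstacle is the treatment of $I_2$: the averages $f_{2^kQ}$ drift by $O(k)$ as the cube is doubled $k$ times, giving linear growth in the annular contributions, and one must use the H\"older exponent $\delta>0$ from (\ref{e_cz3}) to produce the geometric decay $2^{-k\delta}$ that defeats this growth and yields a constant independent of $Q$ and $f$. This is essentially the cube-based variant of the classical argument that shows Calder\'on--Zygmund operators map $L^\infty$ into $\bmo$, adapted to the truncated operator $T_Q$ rather than to a principal value.
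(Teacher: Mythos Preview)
Your proof is correct and follows essentially the same route as the paper: the same splitting into $I_1$ and $I_2$, the same combination of Cauchy--Schwarz, $L^2$-boundedness, and John--Nirenberg for $I_1$, and the same dyadic annular decomposition with telescoping averages and the convergent sum $\sum k\,2^{-k\delta}$ for $I_2$. The only differences are cosmetic (you apply John--Nirenberg before $L^2$-boundedness rather than after, and you denote by $I_j(x)$ what the paper calls the integrands rather than their averages).
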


\begin{proof}
Recall that
\[\begin{aligned}
&T_Q(f-f_{2Q})(x)\\
 &=\int_{2Q}K(x,y)(f(y)-f_{2Q})dy+\int_{\rd\backslash 2Q} (K(x,y)-K(c,y))(f(y)-f_{2Q})dy.
\end{aligned}\]
For the first summand, we have by the H\"older's inequality
\[
\begin{aligned}
I_1
&=  \frac{1}{| Q|}  \int_Q \left| \int_{2Q}K(x,y)(f(y)-f_{2Q})dy\right|dx\\
&\leq \left(\frac{1}{| Q|}  \int_Q  \left|\int_{2Q}K(x,y)(f(y)-f_{2Q})dy\right|^2 dx\right)^{1/2}.
\end{aligned}
\]
 Since $T$ is bounded on $L^2(\rd)$,
\[
I_1\leq C \left(\frac{1}{| 2Q|}  \int_{2Q} |f(y) - f_{2Q}|^2 dy \right)^{\frac{1}{2}},
\]
and then by the John--Nirenberg inequality \cite[Corollary, p. 144 ]{S} for $f\in \bmo(\rd)$,  one has

\begin{equation}
  \leq C \frac{1}{| 2Q|} \int_{2Q} |f(y) - f_{2Q}|dy \leq C\|f\|_{\bmo}.
\end{equation}

For the second summand, by  property (5) of the kernel $K(x,y)$ for $x\in Q$, one has
\[
\begin{aligned}
I_2
&=  \left|\int_{\rd\setminus 2Q} (K(x,y)-K(c,y))(f(y)-f_{2Q})dy \right| \\
&\leq C \int_{\rd\setminus 2Q} \frac{|x- c|^\delta}{|x- y|^{d+\delta}}|f(y)-f_{2Q}| dy.
\end{aligned}
\]
Split the integral in dyadic sum
\[
\begin{aligned}
I_2
& \leq C \sum_{k=1}^\infty
\frac{\ell^\delta}{(2^{k-1}\ell)^{d+\delta}} \int_{2^{k+1}Q \setminus 2^k Q}|f(y) - f_{2Q}|dy \\
&\leq C\sum_{k=1}^\infty \frac{\ell^\delta}{(2^{k}\ell)^{d+\delta}}
\left( \int_{2^{k+1}Q}|f(y) - f_{2^{k+1}Q}|\, dy
+ |2^{k+1}Q| |f_{2Q} - f_{2^{k+1}Q}|\right).
\end{aligned}
\]
Since $|f_{2Q} - f_{2^{k+1}Q}|\leq C k \|f\|_{\bmo}$, we obtain
\[
\begin{aligned}
I_2
&\leq C \sum_{k=1}^\infty \frac{\ell^\delta}{(2^{k}\ell)^{d+\delta}}
 |2^{k+2}Q| (1+k)
 \|f\|_{\bmo}\\
&\leq C \sum_{k=1}^\infty \frac{k}{2^{k\delta}}\|f\|_{\bmo}\leq C \|f\|_{\bmo}.
\end{aligned}\]
Thus, combining with the estimate for $I_1$, we have
\[
\|T_Q(f-f_{2Q})\|_Q  \leq C \|f\|_{\bmo},
\]
and we are done.
\end{proof}
  Lemma \ref{l_23} implies that for each cube $Q$ and for each $f\in \bmo(\rd)$ seminorm  $\|T_Q f\|_Q$ is well defined. We are ready to define $T$ on
$\bmo(\rd)$.
   Put for any $f\in \bmo(\rd)$
  \[
\begin{aligned}
\|Tf\|_{\bmo}
&=\sup_Q \|T_Q f\|_Q \\
&=\sup_Q \|f_{2Q} T_Q1+ T_Q(f-f_{2Q})\|_Q,
\end{aligned}
\]
 if the right  side is finite.
Observe, that for $f\in L^{\infty}(\rd)$  the definition of seminorm $\|Tf\|_{\bmo}$ coincides with (\ref{e_bmo}) from Section 2.1.

\subsection{Normed version of Theorem \ref{t_main}}
Let us return back to the issue left in Remark 1 of Section 1.4. Obtained the correct definition of seminorm $\|Tf\|_{\bmo}$, one can not define  $Tf$ a.e. for $f\in \bmo(\rd)$.  On the other hand,  $T_Qf$  from (\ref{e_tq}) is well defined a.e. for any $f\in \bmo(\rd)$. So far,   for the fixed cube $Q_0$  we write
\[\widetilde{T}f=T_{Q_0}f.\]
Since
  $T_{Q}f-T_{Q_0}f=C(Q, Q_0)$ is a constant  for any cube $Q$,  it holds
\[\|T_Q f\|_Q=\|T_{Q_0} f\|_Q,\]
and therefore, one has equality for seminorms
\[
\|\widetilde{T}f\|_{\bmo} =\|Tf\|_{\bmo}.
\]
Also, replacing  the fixed cube $Q_0$ by any other cube, we obtain the same seminorm.
The advantage of the operator $\widetilde{T}$  compared to $T$ is  that, now we can estimate the mean value
\[
\begin{aligned}
|(\widetilde{T}f)_{Q_0}|
&\leq |f_{2Q_0}|\| T_{Q_0}1\|_{Q_0}+ \|T_{Q_0}(f-f_{2Q_0})\|_{Q_0}\\
&\leq C(|f_{2Q_0}|+\|f\|_{\bmo})
\end{aligned}
\]
for $f\in \bmo(\rd)$.  This follows
\begin{corollary}
  Let $T$ be a Calder\'on--Zygmund operator associated to a standard kernel $K$ and bounded on $L^2(\rd)$.
Then any of the properties of Theorem \ref{t_main} is equivalent to

 $\mathrm{(\tilde{i})}$   $\widetilde{T}$ is bounded on the normed space $\bmo(\rd)$, and it holds
  \begin{equation}
    \|\widetilde{T}f\|^*_{\bmo}\leq C \|f\|^*_{\bmo}
  \end{equation}
  with a constant independent of $f$.
\end{corollary}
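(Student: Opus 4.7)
The plan is to verify directly the equivalence of property (i) of Theorem \ref{t_main} with $(\tilde{\mathrm{i}})$; combined with Theorem \ref{t_main} itself this yields the full cycle (ii)$\Leftrightarrow$(iii)$\Leftrightarrow$(i)$\Leftrightarrow(\tilde{\mathrm{i}})$. The identity $\|T_Q f\|_Q=\|T_{Q_0}f\|_Q$ displayed just before the corollary already gives the seminorm equality $\|\widetilde{T}f\|_{\bmo}=\|Tf\|_{\bmo}$, so the only real content left is to control the single number $|(\widetilde{T}f)_{Q_0}|$ in terms of $\|f\|^*_{\bmo}$.

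The direction $(\tilde{\mathrm{i}})\Rightarrow$(i) is immediate from the chain
\[
\|Tf\|_{\bmo}=\|\widetilde{T}f\|_{\bmo}\leq \|\widetilde{T}f\|^*_{\bmo}\leq C\|f\|^*_{\bmo}.
\]
For (i)$\Rightarrow(\tilde{\mathrm{i}})$ I would reuse the decomposition
\[
|(\widetilde{T}f)_{Q_0}|\leq |f_{2Q_0}|\,\|T_{Q_0}1\|_{Q_0}+\|T_{Q_0}(f-f_{2Q_0})\|_{Q_0}
\]
spelled out just above the corollary. The factor $\|T_{Q_0}1\|_{Q_0}$ is a fixed finite constant, obtained by applying the argument of Section 2.1 to the bounded function $1$ on the fixed cube $Q_0$; the second summand is at most $C\|f\|_{\bmo}$ by Lemma \ref{l_23}. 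A standard BMO telescoping estimate gives $|f_{2Q_0}-f_{Q_0}|\leq C\|f\|_{\bmo}$, hence $|f_{2Q_0}|\leq|f_{Q_0}|+C\|f\|_{\bmo}\leq C\|f\|^*_{\bmo}$. Combining the three bounds yields $|(\widetilde{T}f)_{Q_0}|\leq C\|f\|^*_{\bmo}$, which together with (i) and the seminorm identity $\|\widetilde{T}f\|_{\bmo}=\|Tf\|_{\bmo}$ gives $\|\widetilde{T}f\|^*_{\bmo}\leq C\|f\|^*_{\bmo}$.

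There is no serious obstacle to this plan: the nontrivial analytic work (the $T(L)$ characterisation itself, together with the Calder\'on--Zygmund type estimate of Lemma \ref{l_23}) is already carried out in Theorem \ref{t_main} and Section 2. The corollary is essentially bookkeeping that promotes the homogeneous estimate \eqref{e_11} into a genuine bounded-operator estimate on the normed Banach space $\bmo(\rd)$, by taking advantage of the extra freedom afforded by the normalisation $\widetilde{T}=T_{Q_0}$, which renders the mean value over $Q_0$ meaningful and estimable.
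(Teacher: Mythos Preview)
Your argument is correct and follows exactly the approach of the paper: the equivalence with (i) is obtained from the seminorm identity $\|\widetilde{T}f\|_{\bmo}=\|Tf\|_{\bmo}$ together with the mean-value estimate for $(\widetilde{T}f)_{Q_0}$ displayed just before the corollary, using Lemma~\ref{l_23} and the finiteness of $\|T_{Q_0}1\|_{Q_0}$. Your only addition is making explicit the trivial step $|f_{2Q_0}|\le |f_{Q_0}|+C\|f\|_{\bmo}$, which the paper leaves implicit.
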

  However,  it must be considered that when the both operators  are well defined,  then  $\widetilde{T}$ is a one-dimensional perturbation of  a Calder\'on-Zygmund operator $T$
 \[
  Tf(x)-\widetilde{T}f(x)
 =\int_{\rd\backslash 2Q_0} K(0,y)(f(y)-f_{2Q_0})dy.
 \]

\section{Proof of T(L) theorem}
Implication (i)$\Rightarrow $(iii) is obvious. Let us prove others.
\subsection{Proof (ii)$\Rightarrow$(i)}
By Lemma \ref{l_23}, we see that
\[
\begin{aligned}
\|Tf\|_{\bmo}
&\leq C(\sup_Q |f_{2Q}|\,\| T_Q1\|+ \|f\|_{\bmo})\\
\end{aligned}
\]
Then,
\[|f_{2Q}|\leq |f_{2Q}-f_{\tilde{Q}}|+ |f_{\tilde{Q}}-f_{Q_0}|+|f_{Q_0}|,\]
 where $Q_0$ is a fixed cube and $\tilde{Q}$ is the smallest cube containing $2Q$ and $Q_0$.
 Clearly
 $|f_{2Q}-f_{\tilde{Q}}|\leq C (\log \tilde{\ell}/\ell+1)\|f\|_{\bmo} $
 and
 $|f_{Q_0}-f_{\tilde{Q}}|\leq C (\log \tilde{\ell}+1) \|f\|_{\bmo} $,
therefore,
 \begin{equation}\label{dir_av}
  |f_{2Q}|\leq C L(Q)\|f\|_{\bmo}+|f_{Q_0}|,
 \end{equation}
  and hence,
  \[
\begin{aligned}
\|Tf\|_{\bmo}
&\leq C (\sup_Q (L(Q)\|f\|_{\bmo}+|f_{Q_0}|)\| T_Q 1\|_Q+ \|f\|_{\bmo})\\
&\leq C (\|f\|_{\bmo}\sup_Q L(Q)\| T_Q 1\|_Q +|f_{Q_0}| \sup_Q \| T_Q 1\|_Q + \|f\|_{\bmo}).
\end{aligned}
\]
Note that $\sup_Q L(Q)\| T_Q 1\|_Q=\| T 1\|_{\lmo}$ and $\sup_Q \| T_Q 1\|_Q=\| T 1\|_{\bmo}$, so continue
\[
\begin{aligned}
\|Tf\|_{\bmo}
&\leq C ( \|f\|_{\bmo}\| T 1\|_{\lmo}+ |f_{Q_0}|\| T 1\|_{\bmo}+\|f\|_{\bmo})\\
& \leq C (\| T 1\|_{\lmo}\|f\|_{\bmo}+\| T 1\|_{\bmo} |f_{Q_0}|)\\
&\leq C (\| T 1\|_{\lmo}+\| T 1\|_{\bmo})\|f\|^*_{\bmo}.
\end{aligned}
\]
  Of course,   $\| T 1\|_{\bmo} < \infty$, since $T$ is bounded from $L^\infty(\rd)$ to homogeneous $\bmo(\rd)$.  This completes the proof (ii)$\Rightarrow$(i).

\subsection{Proof (iii)$\Rightarrow$(ii)}
We start with a  lemma.
\begin{lemma}\label{l_ext}
  Let $\log_Q$ be the mean value of the function $\log|x|$  over a cube $Q=Q(c,\ell)$. Then, it holds
 \[|\log_Q- \log (\max (|c|, \ell))|\leq C \]
 with a constant independent of $Q$.
\end{lemma}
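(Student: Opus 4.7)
The plan is to split the analysis of $\log_Q$ into two regimes according to the ratio $|c|/\ell$, and to handle each by direct estimation after a natural normalization. I will use a threshold such as $|c|\le 2\sqrt{d}\,\ell$ versus $|c|>2\sqrt{d}\,\ell$; in each regime $\max(|c|,\ell)$ is given by the obvious term, and the proof reduces to controlling a bounded perturbation of either $\log\ell$ or $\log|c|$. There is no serious obstacle, only the mildly technical step of bounding the mean of $\log|y|$ over a unit cube lying in a fixed ball around the origin, which follows from local integrability of the logarithm.

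In the first regime $|c|\le 2\sqrt{d}\,\ell$ one has $\max(|c|,\ell)\asymp \ell$, so it suffices to show $|\log_Q-\log\ell|\le C$. I would rescale by $x=\ell y$, which transforms $Q=Q(c,\ell)$ into the unit cube $Q'=Q(c/\ell,1)$ whose centre has norm at most $2\sqrt{d}$. Since $\log|x|=\log\ell+\log|y|$, the problem reduces to the uniform bound
\[
\left|\frac{1}{|Q'|}\int_{Q'}\log|y|\,dy\right|\le C_d
\]
over all unit cubes $Q'$ contained in the fixed ball $B(0,R_d)$ with $R_d=2\sqrt{d}+\sqrt{d}/2$. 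This is immediate from $\int_{B(0,R_d)}\bigl|\log|y|\bigr|\,dy<\infty$.

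In the second regime $|c|>2\sqrt{d}\,\ell$ one has $\max(|c|,\ell)=|c|$, and the cube is comfortably away from the origin. Indeed, for any $y\in Q$ the bound $|y-c|\le \ell\sqrt{d}/2<|c|/4$ yields $|c|/2\le|y|\le 2|c|$, so $\bigl|\log(|y|/|c|)\bigr|\le \log 2$ pointwise on $Q$. Averaging over $Q$ gives $\bigl|\log_Q-\log|c|\bigr|\le \log 2$, which combined with the first regime completes the proof.
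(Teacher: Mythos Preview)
Your proof is correct and follows essentially the same approach as the paper: split according to whether $|c|$ or $\ell$ dominates, use the dilation $x\mapsto x/\ell$ in the near-origin regime to reduce to a unit cube contained in a fixed ball (where local integrability of $\log|y|$ gives the bound), and use the pointwise estimate $|\log(|x|/|c|)|\le\log 2$ in the far regime. The paper packages the same two-case scaling argument as the statement that one may take $b_Q=\log\max(|c|,\ell)$ in the $\bmo$ oscillation inequality for $\log|x|$, deferring the computation to Stein's book, whereas you carry it out explicitly.
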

\begin{proof}
 Since $\log|x| \in \bmo(\rd)$,   for an arbitrary cube $Q$ there is a constant $b_{Q}$
 such that
 \[ \frac{1}{|Q|}\int_Q |\log |x|  - b_{Q}| dx \leq C \|\log|x| \,\|_{\bmo}.\]
   It turns out that one may take
  $b_Q=\log |c|$, when $|c|\geq \ell$, and $b_Q=\log\ell$,  when $|c|\leq\ell$.
  This fact is  easily derived by scaling argument  \cite[p. 140--141]{S}.

  With the  choice of $b_Q$ we have
  \[  |\log_Q- \log \max (|c|, \ell)|\leq C \|\log|x|\, \|_{\bmo},\]
 that is required.
 \end{proof}
  Proceed to the proof of  $(iii)\Rightarrow (ii)$.

  \textbf{Step 1.}
   By assumptions of $(iii)$,
 \[\|T \log|x| \,\|_{\bmo}\leq C \|\log|x|\,\|^*_{\bmo}\approx 1,
\]
and hence, by Lemma \ref{l_23} and
by triangle inequality,  we have uniformly with respect to  $Q$
\[ |\log_{2Q}|\| T_Q1\|_Q\leq C  .\]
This follows
\[ (|\log_{2Q}|+1)\| T_Q1\|_Q\leq C+  \| T_Q1\|_Q.\]
 By Lemma \ref{l_ext},
 \[
  |\log (\max (|c|, \ell))|\leq |\log_{2Q}|+C
 \]
 with a constant independent of $Q$, and therefore,
 \[
|\log (\max (|c|, \ell))|\| T_Q1\|_Q \leq C +  \| T_Q1\|_Q,
\]
  If  $|c|\leq \max(\ell, 1)$, i.e. when the euclidean  distance
 between cubes $Q$ and $Q_0$ is small enough compared to the size of the cubes, then  it holds
 \[
  L(Q)\approx |\log (\max (|c|, \ell))| + 1.
 \]
  So,   for  cubes of such kind we obtained an estimate
  \begin{equation}\label{e_last}
   L(Q)  \| T_Q1\|_Q \leq C( 1+  \| T_Q1\|_Q),
\end{equation}
where a constant is independent of $Q$.

\textbf{Step 2.}  Observe, that if $|c|\geq \max(\ell, 1)$, i.e.   when euclidean  distance
 between cubes $Q$ and $Q_0$ is large enough compared to the size of the cubes, then
 \[L(Q)\approx \log\frac{|c|}{\ell}+ 1\]
  and therefore $L(Q)$ is not majorized by $|\log (\max (|c|, \ell))|+ 1$,  in general.
 However, we  prove the estimate (\ref{e_last}) for cubes of this kind. Let $\log_s |x|=\log|x-s|$ be a shift    and  $\log_{s,Q}$ be the mean value of it over a cube $Q$.
For
 the  family of functions $\log_{s,-s}(x)=  \log_s|x|-\log_{-s}|x|$, $s\in \rd $ we prove a lemma.
 \begin{lemma}\label{l_last}
  Norms  $\|\log_{s,-s}(x)\|^*_{\bmo}$ are bounded uniformly in  $s\in\rd$.
 \end{lemma}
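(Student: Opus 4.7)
The goal is to control the two pieces of the starred norm $\|\log_{s,-s}(x)\|^*_{\bmo} = \|\log_{s,-s}(x)\|_{\bmo} + |(\log_{s,-s})_{Q_0}|$ separately, uniformly in $s \in \rd$.

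First I would handle the seminorm part. It is classical that $\log|x| \in \bmo(\rd)$ with some universal seminorm, and the $\bmo$ seminorm is translation-invariant. Hence both $\log|x-s|$ and $\log|x+s|$ have $\bmo$ seminorm equal to $\|\log|x|\,\|_{\bmo}$, and by the triangle inequality
\[
\|\log_{s,-s}(x)\|_{\bmo} \leq 2\|\log|x|\,\|_{\bmo},
\]
which is independent of $s$.

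Next I would handle the mean value over $Q_0 = Q(0,1)$. After a translation the mean of $\log|x-s|$ over $Q_0$ equals the mean of $\log|y|$ over the cube $Q(-s,1)$, and similarly for the mean of $\log|x+s|$ over $Q_0$ in terms of $Q(s,1)$. Applying Lemma \ref{l_ext} to each of these cubes (both have side length $1$ and centers $\mp s$) gives
\[
\Bigl|\,(\log|x-s|)_{Q_0} - \log\max(|s|,1)\,\Bigr| \leq C,\quad
\Bigl|\,(\log|x+s|)_{Q_0} - \log\max(|s|,1)\,\Bigr| \leq C,
\]
with $C$ independent of $s$. Subtracting, the logarithmic terms cancel and we get $|(\log_{s,-s})_{Q_0}| \leq 2C$, uniformly in $s$.

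Combining the two estimates yields the lemma. I do not anticipate a serious obstacle here: the only point requiring a small check is that Lemma \ref{l_ext} is stated for the cube $Q(c,\ell)$ with arbitrary center and side, so it applies equally well to $Q(\pm s, 1)$; the cancellation of the potentially large term $\log\max(|s|,1)$ between the two averages is the essential mechanism that forces uniformity in $s$.
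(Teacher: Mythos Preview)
Your proposal is correct and follows essentially the same approach as the paper: bound the seminorm by $2\|\log|x|\|_{\bmo}$ via translation invariance, and control the $Q_0$-average by applying Lemma~\ref{l_ext} to the two shifted pieces so that the potentially large term $\log\max(|s|,1)$ cancels. The only cosmetic difference is that the paper phrases the second step as $|\log_{s,Q}-\log\max(|c-s|,\ell)|\le C$ for a general cube $Q=Q(c,\ell)$ and then specializes to $Q_0$, whereas you change variables first and apply Lemma~\ref{l_ext} to the cubes $Q(\pm s,1)$; these are the same computation.
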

 \begin{proof}
  Obviously, seminorms of  $\log_{s,-s}(x)$ are estimated  uniformly in $s$,  as follows
  \[
\begin{aligned}
\|\log_{s,-s}(x)\|_{\bmo}
&\leq 2\|\log|x|\|_{\bmo}\leq C.
\end{aligned}
\]
    Also, by Lemma \ref{l_ext},  we have
   \[|\log_{s,Q}-\log\max(|c-s|,\,\ell)|\leq C .\]
   Therefore,  for the mean value $\log_{s,-s,Q}$ over $Q$ of   $\log_{s,-s}(x)$ we get
 \begin{equation}\label{e_llast}
 |\log_{s,-s,Q} -(\log\max(|c-s|,\,\ell)-\log\max(|c+s|,\,\ell))|    \leq C  .
 \end{equation}
  Particularly, for the  cube $Q_0$  with $c=0$ and $\ell=1$
     \[  |\log_{s,-s,Q_0} |\leq C,\]
     where $C$ is independent of $s$.
Combining, we have
 \[
\begin{aligned}
\|\log_{s,-s}(x)\|^*_{\bmo}
&= \|\log_{s,-s}(x)\|_{\bmo}+|\log_{s,-s,Q_0} |\leq C ,
\end{aligned}
\]
that is required.
 \end{proof}
We are ready to finish the proof.
  By assumptions of $(iii)$ and  Lemma \ref{l_last},
\[
\|T (\log_{s,-s}(x)) \|_{\bmo} \leq C.
\]
Then, by Lemma \ref{l_23} and by triangle inequality,
  it holds
  \[
|\log_{s,-s,2Q}|\, \|T_Q1\|_Q   \leq C
 \]
and, so,
\[
(|\log_{s,-s,2Q}|+1)\, \|T_Q1\|_Q   \leq C+\|T_Q1\|_Q .
 \]
By (\ref{e_llast}), we have
\[
|\log\max(|c-s|,\,2\ell)-\log\max(|c+s|,\,2\ell)|\,\| T_Q1\|_Q
\leq C  +  \|T_Q1\|_Q.
 \]
Put $s=-c$, we obtain
\[|\log\max(|c|/\ell,\,1)| \,\| T_Q1\|_Q \leq C +\|T_Q1\|_Q.\]
Since $L(Q)\approx |\log(\max(|c|/\ell,\,1))|+1$, when $|c|\geq \max(\ell, 1)$, one has
\[
L(Q) \,\| T_Q1\|_Q
\leq C (1+\|T_Q1\|_Q)\\
\]
that gives us (\ref{e_last})
for the rest of the cubes.

Now, taking the  supremum  with respect to  $Q$ in (\ref{e_last}), we get
 \[
 \|T1\|_{\lmo}\leq C(1 +\|T1\|_{\bmo}).
 \]
 Since $T$ is bounded from $L^\infty(\rd)$ to homogeneous $\bmo(\rd)$,   $\|T1\|_{\bmo}<\infty$.  This completes the proof (iii)$\Rightarrow$(ii).

\section{Calder\'on commutators}

\subsection{Proof of Theorem \ref{t_com}}
  We will prove  $\mathcal{C}_{A}1\in \lmo(\mathrm{R})$, and apply Theorem \ref{t_main}. 
  
  For arbitrary interval $Q=(c-\ell/2, c+\ell/2)$ let $2Q=(c-\ell, c+\ell)$ be a dilated interval with the same centre.
 Let $\mathcal{C}_{A,Q}$ be the operators we get by  (\ref{e_tq}) for $\mathcal{C}_A$.  
 We start with  the proof that $\mathcal{C}_{A,Q}$
inherits the certain properties of  $\mathcal{C}_{A}$, indicated in \cite[p. 312]{S}.
 \begin{lemma}
 For $x\in Q$ one has the equality
\[\mathcal{C}_{A,Q}1(x)=\mathcal{H}_Q A'(x)+C(Q),\]
where $\mathcal{H}$ is the Hilbert transform with the kernel $\frac{1}{x-y}$ and $C(Q)$ is a constant.
 \end{lemma}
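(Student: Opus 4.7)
The idea is to split the commutator kernel via the elementary identity
\[
\frac{A(x)-A(y)}{(x-y)^2}=\frac{A'(y)}{x-y}+\frac{d}{dy}\left(\frac{A(x)-A(y)}{x-y}\right),
\]
which holds pointwise for $y\neq x$ and is valid in the principal-value sense across $y=x$ when $A\in Lip_1(\mathbb{R})$.  Substituting this identity into both integrals in the definition (\ref{e_tq}) of $\mathcal{C}_{A,Q}1(x)$, the pieces carrying the factor $A'(y)/(x-y)$ assemble---the $2Q$-piece into the principal value, the $\mathbb{R}\setminus 2Q$-piece (after applying the same splitting to $K_A(c,y)$) into the subtracted part---into exactly $\mathcal{H}_Q A'(x)$.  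The task is then reduced to showing that the remaining ``total derivative'' pieces contribute only an $x$-independent quantity $C(Q)$.

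For the inner piece I would set $G(y)=\frac{A(x)-A(y)}{x-y}$, which extends continuously to $y=x$ (with value $A'(x)$ at any point of differentiability of $A$), and note that the symmetric truncation argument gives
\[
\mathrm{p.v.}\int_{2Q}G'(y)\,dy=G(c+\ell)-G(c-\ell),
\]
since the boundary contributions at $y=x\pm\varepsilon$ cancel as $\varepsilon\to 0$ by continuity of $G$ at $y=x$.  For the outer piece, with $G_c(y)=\frac{A(c)-A(y)}{c-y}$ and $F(y)=G(y)-G_c(y)$, a short cross-multiplication puts the numerator of $F$ in the form $y\bigl[A(c)-A(x)\bigr]+c\bigl[A(x)-A(y)\bigr]+x\bigl[A(y)-A(c)\bigr]$, which, thanks to the Lipschitz bound on $A$, is $O(|y|)$; dividing by $(x-y)(c-y)=O(y^2)$ yields $F(y)=O(|y|^{-1})$ as $|y|\to\infty$.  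Hence the boundary terms at $\pm\infty$ vanish and
\[
\int_{\mathbb{R}\setminus 2Q}F'(y)\,dy=F(c-\ell)-F(c+\ell).
\]

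The main (and essentially the only substantive) step is the algebraic combination of the two boundary contributions.  Using $F=G-G_c$ one checks at once that
\[
[G(c+\ell)-G(c-\ell)]+[F(c-\ell)-F(c+\ell)]=G_c(c+\ell)-G_c(c-\ell),
\]
with every $x$-dependent term cancelling.  This produces the explicit constant
\[
C(Q)=G_c(c+\ell)-G_c(c-\ell)=\frac{A(c+\ell)-A(c)}{\ell}-\frac{A(c)-A(c-\ell)}{\ell},
\]
which depends on $Q$ alone, and delivers the identity $\mathcal{C}_{A,Q}1(x)=\mathcal{H}_Q A'(x)+C(Q)$ for a.e.\ $x\in Q$.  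The only anticipated obstacle is bookkeeping: the integration by parts across the singularity at $y=x$ must be carried out on symmetric truncations and passed to the limit, but since $G$ is bounded and continuous at $y=x$ this is routine.
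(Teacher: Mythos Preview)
Your argument is correct and follows essentially the same route as the paper: the identity you start from is precisely the pointwise form of the integration by parts the paper performs, and your boundary-term bookkeeping reproduces the paper's nonintegral term $N(x)=G_c(c+\ell)-G_c(c-\ell)$. If anything, your treatment of the decay at infinity is more careful---the paper only invokes $A(y)=o(y^2)$, whereas your cross-multiplication shows $F(y)=O(|y|^{-1})$, which is what is actually needed to make the boundary contribution at $\pm\infty$ vanish.
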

 \begin{proof}
   We have
 \[
 \begin{aligned}
\mathcal{C}_{A,Q}1(x)
=\int_{2Q}\frac{A(x)-A(y)}{(x-y)^2}dy+ \int_{\rd\setminus 2Q} \left(\frac{A(x)-A(y)}{(x-y)^2}-\frac{A(c)-A(y)}{(c-y)^2}\right)dy,
\end{aligned}
 \]
where the first term  is a PV integral. Integrating by parts
\[
 \begin{aligned}
\mathcal{C}_{A,Q}1(x)
&=\frac{A(x)-A(y)}{x-y}\bigg|^{c+\ell}_{y=c-\ell}+  \left(\frac{A(x)-A(y)}{x-y}-\frac{A(c)-A(y)}{c-y}\right) \bigg|^{c-\ell}_{y=-\infty}+\bigg|^{\infty}_{y=c+\ell}\\
&+\int_{2Q}\frac{A'(y)}{x-y}dy+ \int_{\rd\setminus 2Q} \left(\frac{A'(y)}{x-y}-\frac{A'(c)}{c-y}\right)dy\\
&=N(x)+\mathcal{H}_Q A'(x).
\end{aligned}
 \]
The nonintegral term $N(x)$ is a constant on $Q$. Indeed,
\[
N(x)
=\frac{A(c)-A(y)}{c-y}\bigg|^{c+\ell}_{y=c-\ell} +  \left(\frac{A(x)-A(y)}{x-y}-\frac{A(c)-A(y)}{c-y}\right) \bigg|^{\infty}_{y=-\infty}.
 \]
Since $A' \in L^\infty(\mathbb{R})$ implies  $A(y)=o(y^2)$ at infinity,  the second summand above is zero, while the first one depends on $c$ and $\ell$. That is
$ N(x)=C(Q)$.
 \end{proof}

 Even  the  classical Hilbert transform is zero on constants,  it takes some times in  setting of the paper to check  $\mathcal{H}_Q1(x)=0$ for   $x\in Q$. Indeed,
 \[
 \begin{aligned}
\mathcal{H}_{Q}1(x)
&=\int_{2Q}\frac{1}{x-y}dy+ \int_{\rd\setminus 2Q} \left(\frac{1}{x-y}-\frac{1}{c-y}\right)dy\\
&=-\log|x-y|\bigg|^{c+\ell}_{c-\ell}- (\log|x-y|-\log|c-y|)\bigg|^{c-\ell}_{-\infty} +\bigg|^{\infty}_{c+\ell}\\
&=\log|x-c|\bigg|^{c-\ell}_{c+\ell}=0.
\end{aligned}
 \]
Hence, for $x\in Q$ we may write
 \[\mathcal{C}_{A,Q}1(x)=\mathcal{H}_Q (A'-A'_{2Q})(x)+C(Q),\]
where $A'_{2Q}$ denotes the mean value with respect to $2Q$. So,
\[ \begin{aligned}
\|\mathcal{C}_{A,Q}1\|_Q
&=\|\mathcal{H}_Q (A'-A'_{2Q})\|_Q.\\
\end{aligned}
\]
In order to estimate $\|\mathcal{H}_Q (A'-A'_{2Q})\|_Q$, consider  the terms  $I'_1$ and $I'_2$ we get  from the terms $I_1$ and $I_2$ in  Lemma \ref{l_23}, replacing $f$ by $A'$ and $T$ by $\mathcal{H}$. Repeating argument of Lemma \ref{l_23},  it holds for $I'_1$
 \[
\begin{aligned}
 I'_1
 &=\frac{1}{| Q|}  \int_Q \left| \int_{2Q}\frac{A'(y)-A'_{2Q}}{x-y}dy\right|dx\\
&\leq C \frac{1}{| 2Q|} \int_{2Q} |A'(y)-A'_{2Q}|dy\\
&= C\frac{L(Q)}{L(Q)} \frac{1}{|2Q|} \int_{2Q} |A'(y)-A'_{2Q}|dy\\
& \leq C \frac{1}{L(Q)}\|A'\|_{\lmo}.
\end{aligned}
\]
Taking care of the term $I'_2$ is not so easy. Split it in dyadic sum,
 \[
\begin{aligned}
I'_2
&= \left|\int_{\mathbb{R}\setminus 2Q} (A'(y)-A'_{2Q})\left(\frac{1}{x-y}-\frac{1}{c-y}dy \right)\right| \\
&\leq C\sum_{k=1}^\infty \frac{\ell^\delta}{(2^{k}\ell)^{1+\delta}}
\left( \int_{2^{k+1}Q\backslash2^k Q}|A'(y) - A'_{2^{k+1}Q}|\, dy+ |2^{k+1}Q| |A'_{2Q} - A'_{2^{k+1}Q}|\right),
\end{aligned}
\]
where by  $A'_{2^kQ}$ we define  the mean values of $A'$ over $2^kQ$, By telescopic summation we have
\[
\begin{aligned}
|A'_{2Q} - A'_{2^{k+1}Q}|
& \leq \sum_{s=1}^k |A'_{2^sQ} - A'_{2^{s+1}Q}|\\
 & \leq C \sum_{s=1}^k \frac{1}{|2^{k+1}Q|} \int_{2^{s+1}Q}|A'(y) - A'_{2^{s+1}Q}|dy,
  \end{aligned}
\]
and substituting in $I'_2$, it holds
\[
\begin{aligned}
I'_2
&\leq C\sum_{k=1}^\infty \frac{\ell^\delta}{(2^{k}\ell)^{1+\delta}}\bigg( \int_{2^{k+1}Q}|A'(y) - A'_{2^{k+1}Q}| dy\\
& + |2^{k+1}Q| \sum_{s=1}^k \frac{1}{|2^{s+1}Q|} \int_{2^{s+1}Q}|A'(y) - A'_{2^{k+1}Q}|dy \bigg)\\
& \leq C\sum_{k=1}^\infty \frac{1}{2^{k\delta}}\sum_{s=1}^k \frac{1}{|2^{s+1}Q|} \int_{2^{s+1}Q}|A'(y) - A'_{2^{k+1}Q}|dy \\
& =C\sum_{k=1}^\infty \frac{1}{2^{k\delta}}\sum_{s=1}^k \frac{L(2^{s+1}Q)}{L(2^{s+1}Q)|2^{s+1}Q|} \int_{2^{s+1}Q}|A'(y) - A'_{2^{k+1}Q}|dy \\
& \leq C\sum_{k=1}^\infty \frac{1}{2^{k\delta}}\sum_{s=1}^k \frac{\|A'\|_{\lmo}}{L(2^{s+1}Q)}\\
& \approx \sum_{s=1}^\infty \frac{1}{2^{s\delta}L(2^{s+1}Q)}\|A'\|_{\lmo},
\end{aligned}
\]
where in the last line we change the summation order.

We claim that
\begin{equation}\label{100}
  \sum_{s=1}^\infty \frac{1}{2^{s\delta}L(2^{s+1}Q)}\leq C \frac{1}{L(Q)}.
\end{equation}
With  estimate (\ref{100}) in hand, we get
   \[
\begin{aligned}
\|\mathcal{C}_{A,Q}1\|_Q
&=\|\mathcal{H}_Q A'\|_Q\\
&\leq C ( I'_1+I'_2)\\
&\leq C \frac{1}{L(Q)}\|A'\|_{\lmo},
\end{aligned}
\]
which follows  $\mathcal{C}_{A}1\in \lmo(\mathrm{R})$, and Theorem \ref{t_main} completes the proof.
 
 So, it remains to prove (\ref{100}). Consider
 two cases according to whether Euclidean distance between a cube $Q$ and the fixed cube $Q_0$ is less or greater 1.

\textbf{Case 1.}  If the  distance $d$ between a cube $Q$ and the fixed cube $Q_0$ is less than 1, we compare $\ell_s=\ell(2^sQ)=2^s\ell$
with 1.
 If $\ell_s\leq1$, that is $s\leq\log 1/\ell$, then $ L(2^sQ)\approx \log  1/\ell_s+1$, while if
$\ell_s>1$, i.e. $s>\log 1/\ell$, then $ L(2^sQ)\approx \log  \ell_s+1$. Also, split the sum (\ref{100}) on level $\ell_s=1$, then  if the first sum on the
right hand side is non empty, it holds
\[
\begin{aligned}
 \sum_{s=1}^\infty \frac{1}{2^{s\delta}L(2^{s+1}Q)}
 &=\sum_{\ell_s\leq1} \frac{1}{2^{s\delta}(\log  1/\ell_s+1)}+ \sum_{\ell_s>1}\frac{1}{2^{s\delta}(\log  \ell_s+1)}\\
&=\sum_{s\leq\log 1/\ell} \frac{1}{2^{s\delta}(\log  1/2^s\ell+1)}+\sum_{s>\log 1/\ell} \frac{1}{2^{s\delta}(\log  2^s\ell+1)}\\
 &\leq C \int_{1}^{1/\ell} \frac{dt}{t^{1+\delta}(\log  1/t\ell+1)}+\int_{1/\ell}^{\infty} \frac{dt}{t^{1+\delta}(\log  t\ell+1)}\\
&\leq C\frac{1}{\log  1/\ell+1}+ \ell^{\delta}\\
&\leq C \frac{1}{\log  1/\ell+1}\\
&\leq C\frac{1}{L(Q)}.
\end{aligned}
\]
Obviously, the same estimate is valid if the first sum is empty (for instance if $\ell> 1$). Then the   sum (\ref{100}) is estimated   by  $\ell^{\delta}= o(1/L(Q))$.

\textbf{Case 2.}  If the  distance $d$ between a cube $Q$ and the fixed cube $Q_0$ is not less than 1, we compare $\ell_s$ with $d$.
We have $ L(2^sQ)\approx \log  (d/\ell_s+1)$, if $\ell_s\leq d$, and $ L(2^sQ)\approx \log  (\ell_s+1)$, if
$\ell_s>d$. Also, split the sum \ref{100} on level $\ell_s=d$. Like the previous observation, whether  the first sum on the right side is  empty or not, it holds

\[
\begin{aligned}
 \sum_{s=1}^\infty \frac{1}{2^{s\delta}L(2^{s+1}Q)}
 &=\sum_{\ell_s\leq d} \frac{1}{2^{s\delta}(\log  d/\ell_s+1)}+ \sum_{\ell_s> d}\frac{1}{2^{s\delta}(\log  \ell_s+1)}\\
&=\sum_{s\leq\log d/\ell} \frac{1}{2^{s\delta}(\log  d/2^s\ell+1)}+\sum_{s>\log d/\ell} \frac{1}{2^{s\delta}(\log  2^s\ell+1)}\\
 &\leq C \int_{1}^{d/\ell} \frac{dt}{t^{1+\delta}(\log  d/t\ell+1)}+\int_{d/\ell}^{\infty} \frac{dt}{t^{1+\delta}(\log  t\ell+1)}\\
&\leq C \frac{1}{\log  d/\ell+1}+ (\ell/d)^{\delta}\\
&\leq C \frac{1}{\log  d/\ell+1}\\
&\leq C\frac{1}{L(Q)},
\end{aligned}
\]
that is required  for   the  second case and for Theorem \ref{t_com}.

\subsection {Sharpness of condition $A'\in \lmo(\mathbb{R})$ }
 Let $\mathcal{C}$ be the Calder\'on commutator associated to  $A(x)=|x|$. Let $Q=(c/2, 3c/2)$ be an interval with the centre $c>0$ and length $|Q|=c$, 
We will prove that $ \|\mathcal{C}_Q1\|_Q$ is separated from zero. From  (\ref{lmo}) this  follows
 $\mathcal{C}1\notin \lmo (\mathbb{R})$, and by Theorem \ref{t_main}, $\mathcal{C}$ is unbounded in $\bmo(\mathbb{R}$).

  Defining $\mathcal{C}_Q$ by (\ref{e_tq}),   we write for $y\in Q$ 
 \[
 \begin{aligned}
\mathcal{C}_Q1(x)
&=\int_{2Q}\frac{|x|-|y|}{(x-y)^2}dy+ \int_{\rd\setminus 2Q} \left(\frac{|x|-|y|}{(x-y)^2}-\frac{|c|-|y|}{(c-y)^2}\right)dy\\
&=PV\int_{0<y<2c}\frac{1}{x-y}dy\\
&+\int_{y>2c} \left(\frac{1}{x-y}-\frac{1}{c-y}\right)dy\\
&+\int_{y<0} \left(\frac{x+y}{(x-y)^2}-\frac{c+y}{(c-y)^2}\right)dy\\
&=2(\log x-\log c).
\end{aligned}
 \]
 Then,
\[
 \begin{aligned}
 \|\mathcal{C}_Q1\|_Q
&=\inf_{b_Q \in \mathbb{R}}\frac{2}{|Q|}\int_Q|\log x-\log c-b_Q|dy\\
&=2\inf_{b_Q \in \mathbb{R}} \int_{1/2}^{3/2}|\log t-b_Q|dt\\
&\geq C>0,
\end{aligned}
\]
 that is required.
\bibliographystyle{amsplain}

\end{document}